\newtheorem{lem}{Lemma}
\newtheorem{Theo}{Theorem}
\newtheorem{prop}{Proposition}
\newcommand{\Q}{\mathbb{Q}}
\newcommand{\Z}{\mathbb{Z}}
\newcommand{\N}{\mathbb{N}}
\newcommand{\R}{\mathbb{R}}
\begin{document}
\baselineskip=17pt
\title{The irrationality of some number theoretical series}
\author{J.-C. Schlage-Puchta}
\maketitle

MSC-Index: 11J72\\[3mm]

In this note we prove the irrationality of some series by
combining methods from elementary and analytic number theory with
methods from the theory of uniform distribution.
Our first results yields an explicit set of uncountably many
$\Q$-linearly independent real numbers.
\begin{Theo}
For a real number $\lambda\geq0$ define the series
\[
S_\lambda=\sum_{n\geq0}\frac{[n^\lambda]}{n!}.
\]
Then the set $\{1, e\}\cup\{S_\lambda:\lambda\in(0, \infty)\setminus\Z\}$ is $\Q$-linearly independent.
\end{Theo}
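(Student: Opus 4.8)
Suppose, for contradiction, that some finite $\Q$-linear combination vanishes:
$$a + be + \sum_{i=1}^{k} c_i S_{\lambda_i} = 0$$
with rationals $a, b, c_i$, the $\lambda_i \in (0,\infty)\setminus\Z$ distinct, and not all $c_i$ zero. Clearing denominators, we may take $a, b, c_i \in \Z$. Since $e = \sum 1/n!$ and $S_{\lambda_i} = \sum [n^{\lambda_i}]/n!$, the combination is $\sum_{n\ge 0} \alpha_n / n!$ where $\alpha_n = b + \sum_i c_i [n^{\lambda_i}]$.

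**Truncation / tail estimate.** The standard irrationality-of-$e$ trick: multiply by $N!$ for large $N$ and split into head and tail. The head $\sum_{n=0}^{N} \alpha_n N!/n!$ is an integer. So I need the tail $T_N = N! \sum_{n>N} \alpha_n/n!$ to be an integer too (forcing $T_N \in \Z$) and then to derive a contradiction from its size or its fractional behavior. The tail is dominated by its first term, $\alpha_{N+1}/(N+1)$, up to a rapidly decaying remainder: $T_N = \frac{\alpha_{N+1}}{N+1} + \frac{\alpha_{N+2}}{(N+1)(N+2)} + \cdots$. Since $\alpha_n \ll n^{\lambda_{\max}}$ polynomially, for $N$ large the terms after the first are $o(1)$, so $T_N - \alpha_{N+1}/(N+1) \to 0$. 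Hence $T_N \in \Z$ forces $\alpha_{N+1}/(N+1)$ to be within $o(1)$ of an integer for all large $N$; i.e. $\alpha_{N+1} \equiv \varepsilon_N \pmod{N+1}$ with $\varepsilon_N \to 0$ relative to $N+1$ — actually the cleanest route is: the fractional part of $\alpha_{N+1}/(N+1)$ tends to $0$ or $1$, and by a slightly finer expansion one shows $(N+1) \mid \alpha_{N+1}$ for all large $N$, OR more robustly that $\alpha_{N+1}/(N+1) \to 0$ is impossible unless $\alpha_{N+1} = o(N)$, and one iterates. I'd carry this out carefully: write $T_N = \frac{1}{N+1}\bigl(\alpha_{N+1} + \frac{\alpha_{N+2}}{N+2} + \dots\bigr) = \frac{1}{N+1}(\alpha_{N+1} + r_N)$ with $r_N\to 0$ since $\alpha_n/n \to 0$ coefficient-wise is false but $\alpha_n/n^{1+\delta}$... — the correct bound is $|\alpha_n| \le C n^{\lambda_{\max}}$ so $|r_N| \le C\sum_{j\ge 2} (N+j)^{\lambda_{\max}}/((N+2)\cdots(N+j)) \to 0$. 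Thus $T_N\in\Z$ gives $\alpha_{N+1} + r_N \equiv 0 \pmod{N+1}$ with $|r_N|<1$ eventually, so $(N+1)\mid (\alpha_{N+1} + r_N)$... but $r_N$ need not be an integer; rather $\alpha_{N+1} + r_N$ is $(N+1)T_N \in\Z$ only if... hmm — the honest statement is: $(N+1)T_N = \alpha_{N+1} + r_N$ is a real number which is an integer (being $(N+1)\cdot$integer), and $|r_N - \{r_N\text{'s integer part}\}|$ small means $\alpha_{N+1} \equiv -(\text{that integer}) \pmod{1}$, trivial; the real content is the *size*: $|T_N| \le \frac{|\alpha_{N+1}| + o(1)}{N+1}$, which need not be small. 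So one must use more terms. The robust approach: for any fixed $M$, $T_N = \sum_{j=1}^{M}\frac{\alpha_{N+j}}{(N+1)\cdots(N+j)} + o(1/N^{M'})$, and $(N+1)(N+2)\cdots(N+M)\,T_N \in \Z$, giving a congruence on a polynomial combination of the $\alpha_{N+j}$ modulo $(N+1)\cdots(N+M)$. This turns the problem into: the sequence $\alpha_n = b + \sum c_i[n^{\lambda_i}]$ satisfies strong simultaneous congruences along every window, which I want to show is impossible.

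**The equidistribution input — the main obstacle.** This is where the theory of uniform distribution enters, as the introduction promises. The key point is that for $\lambda \notin \Z$, the fractional parts $\{n^\lambda\}$ are equidistributed mod $1$ (a classical result; for $0<\lambda<1$ one can even get discrepancy bounds, and for general non-integer $\lambda$ equidistribution of $\{n^\lambda\}$ holds), so $[n^\lambda] = n^\lambda - \{n^\lambda\}$ behaves "randomly" modulo any fixed $q$, and more importantly modulo the growing modulus $N$. I would isolate the largest exponent $\lambda_{i_0} =: \mu$ among those with $c_{i_0}\ne 0$; then $\alpha_n \sim c_{i_0} n^{\mu}$ and I must show $n^{\mu}$ (rounded) cannot be "almost always divisible by $n$" (or satisfy the window congruences). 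Concretely, divisibility of $[n^\mu]$ by $n$ for a positive-density set of $n$ would say $\{n^{\mu-1}\}$ (roughly) clusters near $\frac1n\{n^\mu\}$, i.e. is abnormally small infinitely often on a dense set — contradicting equidistribution of $\{n^{\mu-1}\}$ when $\mu - 1\notin\Z$, or when $\mu-1 = 0$, i.e. $\mu\in(0,1)$... wait $\mu$ itself is non-integer so $\mu-1$ is non-integer unless we need a separate small-$\mu$ argument; in all cases $\{n^{\mu}\}$ or a related power is equidistributed. The technical heart, then, is:

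(i) reduce the linear-combination hypothesis to an infinitude of divisibility/congruence conditions $N \mid \alpha_N$ (equivalently on windows) via the $e$-irrationality truncation, carefully handling that the "tail" is not small but only the *ratio* structure matters — I expect this to need the window trick with $M$ growing slowly with $N$;

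(ii) show these conditions force $\sum_i c_i [n^{\lambda_i}] + b \equiv 0 \pmod n$ for all large $n$, then peel off the dominant term and invoke equidistribution of $\{n^{\lambda}\}$ (Weyl, and for $\lambda \in (0,1)$ the sharper van der Corput / Fejér estimates) to contradict the implied non-equidistribution.

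I expect step (ii), linking "divisible by $n$ for all large $n$" to a van-der-Corput–type equidistribution contradiction — in particular getting a *uniform* enough discrepancy estimate for $\{n^\lambda\}$ to overcome the arithmetic, and dealing with several exponents $\lambda_i$ simultaneously by induction on $k$ peeling off the top exponent — to be the crux of the argument; the truncation in step (i) is the familiar part, and the $\{1,e\}$ piece just fixes the roles of the constant term and the $1/n!$-coefficient.
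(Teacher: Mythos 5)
You have identified the right two ingredients (the $n!$-truncation and a van der Corput equidistribution input), but the bridge you propose between them is a genuine gap. Your step (i)/(ii) tries to convert ``$T_N\in\Z$'' into arithmetic conditions --- $(N+1)\mid\alpha_{N+1}$, or window congruences modulo $(N+1)\cdots(N+M)$ --- and neither can be extracted. From $(N+1)T_N=\alpha_{N+1}+T_{N+1}$ you only get $\alpha_{N+1}\equiv-T_{N+1}\pmod{N+1}$, where $T_{N+1}$ is itself an unknown integer of size up to $N^{\lambda_k-1}$, so no divisibility of $\alpha_{N+1}$ follows; and the window version fails because the truncation error, which is $O(1/N)$ as a real number, becomes $O(N^{M-1})$ once you multiply by $(N+1)\cdots(N+M)$ to force an integer congruence, destroying any information modulo that product. (Relatedly, your claimed bound $|r_N|\to0$ for the tail past the first term is false when $\lambda_k>1$: the $j=2$ term alone is $\asymp N^{\lambda_k-2}\cdot N=N^{\lambda_k-1}$ in your first normalization, and $\asymp N^{\lambda_k-2}$ in the second, which still diverges for $\lambda_k>2$.) The paper's resolution is to never pass to congruences: it keeps the truncated tail as a real number. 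With the \emph{fixed} cutoff $M=[\lambda_k]+1$ (not $M$ growing with $N$), discarding the terms $\nu>M$ and replacing each $[x]$ by $x$ costs only $O(1/n)$, giving
\[
\Big\|\sum_{\nu=1}^{M}\frac{a_1(n+\nu)^{\lambda_1}+\cdots+a_k(n+\nu)^{\lambda_k}}{(n+1)\cdots(n+\nu)}\Big\|\ll\frac1n .
\]

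The contradiction is then obtained for this quantity directly: call it $f(n)$; it is a single smooth function of a real variable with leading behaviour $a_kn^{\lambda_k-1}$ and non-integral leading exponent, so a suitable derivative $f^{(K+2)}$ has the size and sign constancy required by the Weyl--van der Corput lemma, whence $(f(n))$ is equidistributed modulo $1$ --- incompatible with $\|f(n)\|\ll1/n$. This removes the need for your ``induction on $k$ peeling off the top exponent'': all exponents $\lambda_1,\dots,\lambda_k$ are handled simultaneously because equidistribution is proved for the full linear combination $f$, not for $\{n^\mu\}$ of a single isolated exponent (peeling a term off a mod-$1$ statement is not legitimate anyway, since the lower-order terms also live mod $1$). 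Finally, the case $\lambda_k<1$ needs a separate (trivial) treatment: there $M=1$ and $f(n)\to0$, so $\|f(n)\|=f(n)\asymp n^{\lambda_k-1}\gg1/n$ already gives the contradiction without any equidistribution.
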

Some properties of the function $S_\lambda$ are given in the following.
\begin{prop}
The function $\lambda\mapsto S_\lambda$ is injective, monotone, and continuous from the
right. Its image has cardinality of the continuum, Hausdorff dimension
0, and it is totally disconnected.
\end{prop}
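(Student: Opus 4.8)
The plan is to verify the five assertions more or less separately; only the claim about Hausdorff dimension needs any real work, the rest being elementary. I begin with monotonicity and injectivity. For $0\le\lambda_1\le\lambda_2$ and every $n\ge1$ one has $n^{\lambda_1}\le n^{\lambda_2}$, hence $[n^{\lambda_1}]\le[n^{\lambda_2}]$ (the $n=0$ summand being $0$ throughout), and summing gives $S_{\lambda_1}\le S_{\lambda_2}$, so $\lambda\mapsto S_\lambda$ is non-decreasing. If $\lambda_1<\lambda_2$ then $n^{\lambda_2-\lambda_1}\to\infty$, so for all $n$ beyond some $n_0$ we have $n^{\lambda_2}\ge n^{\lambda_1}+1$ and hence $[n^{\lambda_2}]\ge[n^{\lambda_1}]+1$; therefore $S_{\lambda_2}-S_{\lambda_1}\ge\sum_{n\ge n_0}1/n!>0$, so the map is strictly increasing, in particular injective. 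As its domain has cardinality $\mathfrak c$ while its image lies in $\R$, the image has cardinality exactly $\mathfrak c$.

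For right-continuity, note that for each $n\ge2$ the map $\lambda\mapsto n^\lambda$ is continuous and strictly increasing, so $\lambda\mapsto[n^\lambda]$ is continuous from the right (the floor can jump only where $n^\lambda$ meets an integer, and there the right-hand limit equals the value). On any $[0,T]$ we have $0\le[n^\lambda]/n!\le n^T/n!$ with $\sum_n n^T/n!<\infty$, so the series for $S_\lambda$ converges uniformly on $[0,T]$, and a locally uniform limit of right-continuous functions is right-continuous. The same computation shows $S$ has a genuine jump, of size $\sum_{n\ge2,\,n^{\lambda_0}\in\Z}1/n!$, exactly at those $\lambda_0$ for which $n^{\lambda_0}\in\Z$ for some $n\ge2$; this is used below.

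For the Hausdorff dimension I would exhibit an economical cover. Fix $T\in\N$ and an integer $M$. As $\lambda$ runs over $[0,T]$, each $[n^\lambda]$ with $2\le n\le M$ jumps only at the (at most $n^T$) points $\lambda=\ln k/\ln n$, $2\le k\le n^T$, so the finite sum $\sigma_M(\lambda)=\sum_{2\le n\le M}[n^\lambda]/n!$ is constant on each of at most $1+\sum_{2\le n\le M}n^T\le2M^{T+1}$ subintervals of $[0,T]$. On any such subinterval $S_\lambda$ differs from the value of $\sigma_M$ there by at most $\varepsilon_M:=\sum_{n>M}n^T/n!=O(M^T/M!)$. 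Hence $S([0,T])$ is covered by at most $2M^{T+1}$ sets of diameter $\le\varepsilon_M$, and for every fixed $s>0$,
\[
2M^{T+1}\varepsilon_M^{\,s}=O\!\left(\frac{M^{T+1+Ts}}{(M!)^s}\right)\longrightarrow 0\qquad(M\to\infty),
\]
so $\mathcal H^s(S([0,T]))=0$ for all $s>0$, i.e.\ $\dim_H S([0,T])=0$. Since $S([0,\infty))=\bigcup_{T\in\N}S([0,T])$, countable stability of Hausdorff dimension gives $\dim_H S([0,\infty))=0$.

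Finally, total disconnectedness. By the above $S$ is strictly increasing and right-continuous, with jumps exactly at the points of $L=\{\ln m/\ln n:\,m\ge2,\ n\ge2\}$, and $L$ is dense in $(0,\infty)$: given $x>0$, the choice $m=[n^x]$ for large $n$ gives $|\ln m/\ln n-x|\le-\ln(1-n^{-x})/\ln n\to0$. So if $S_{\lambda_1}<S_{\lambda_2}$ are two points of the image (hence $\lambda_1<\lambda_2$), then choosing $\lambda_0\in L\cap(\lambda_1,\lambda_2)$ gives, by monotonicity, $S_{\lambda_1}\le S(\lambda_0^-)<S(\lambda_0)\le S_{\lambda_2}$, while no value of $S$ lies in the non-empty open interval $(S(\lambda_0^-),S(\lambda_0))$; hence the image contains no interval of positive length and so is totally disconnected. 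The only non-routine point in all this is the dimension bound, and there the decisive feature is that the tail $\varepsilon_M$ decays factorially in $M$ and hence overwhelms the merely polynomial number $2M^{T+1}$ of covering intervals; the small combinatorial estimate counting the intervals of constancy of $\sigma_M$ is the one step that requires genuine attention.
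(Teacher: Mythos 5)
Your proof is correct, and for five of the six assertions (monotonicity, injectivity, cardinality of the image, right-continuity, Hausdorff dimension) it follows essentially the same route as the paper: injectivity via $[n^{\lambda_2}]\geq[n^{\lambda_1}]+1$ for all $n\geq n_0$, and the dimension bound via covering the image of a bounded parameter interval by polynomially many sets of factorially small diameter, indexed by the intervals of constancy of a partial sum, are exactly the paper's arguments (your uniform-convergence packaging of right-continuity is a cosmetic variant of the paper's explicit tail estimate). The one genuine divergence is total disconnectedness. The paper deduces it from Theorem~1: an interval of positive length in the image would contain infinitely many rationals, whereas by the irrationality statement only the values $S_\lambda$ with $\lambda\in\Z$ --- of which only finitely many lie in any bounded interval --- could be rational. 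You instead argue directly that $S$ has a positive jump of size $\sum_{n\geq 2,\ n^{\lambda_0}\in\Z}1/n!$ at every $\lambda_0$ in the dense set $\{\ln m/\ln n:\ m,n\geq 2\}$, so that between any two image points there is an omitted open interval $(S(\lambda_0^-),S(\lambda_0))$. Your route is more self-contained, making Proposition~1 independent of the much harder Theorem~1, at the cost of having to verify the density of the jump set and the interchange of limit and sum in computing $S(\lambda_0^-)$, both of which you do correctly; the paper's route is shorter given that Theorem~1 is already available.
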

Our second Theorem deals with real numbers defined by
their digital expansion.
\begin{Theo}
Let $b\geq 2$ be an integer, not a proper power.
Let $g:\N\to\R$ be a continuous non-decreasing function, such that
$\frac{g(n+1)}{g(n)}\to 1$. Let $f:\N\to\N$ be a non-decreasing function,
such that $\frac{f(n+1)}{f(n)}\sim g(n)$, and denote by $\alpha$ the real
number obtained by writing out its digits to base $b$ as
$0.f(1)f(2)f(3)\ldots$. Suppose that $\alpha$ is rational. Then $g$
converges to some constant $c$, $c$ is a power of $b$, and
$f(n+1)=cf(n)+\mathcal{O}(1)$.
\end{Theo}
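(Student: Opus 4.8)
The plan is to convert the eventual periodicity of the base-$b$ expansion of $\alpha$ into rigidity of the blocks $f(1),f(2),\dots$. Write $d(n)$ for the number of base-$b$ digits of $f(n)$ and $L_n=\sum_{k\le n}d(k)$, so that the digits of $f(n)$ occupy positions $L_{n-1}+1,\dots,L_n$ of the expansion $0.f(1)f(2)\dots$. If $f$ is bounded it is eventually constant (being non-decreasing and integer-valued), so $g\to1=b^{0}$ and $f(n+1)=f(n)+\mathcal{O}(1)$; hence assume $f(n)\to\infty$, so $d(n)\to\infty$ while $d$ stays non-decreasing. Since $g$ is non-decreasing, $c:=\lim_n g(n)$ exists in $(0,\infty]$ and $f(n+1)/f(n)\to c$. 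Now suppose $\alpha$ is rational, so its digits are eventually periodic with preperiod $q$ and period $p$; for each offset $s$ let $\gamma_s\in[0,1)$ be the purely periodic base-$b$ fraction formed by the tail digits read from offset $s$ (a rational with denominator dividing $b^{p}-1$). Once the $n$-th block lies beyond position $q$ it is exactly the length-$d(n)$ window of the periodic tail starting at offset $s_n\equiv L_{n-1}-q\pmod p$, which gives
\[
f(n)=\bigl\lfloor\gamma_{s_n}b^{d(n)}\bigr\rfloor=\gamma_{s_n}b^{d(n)}+\mathcal{O}(1),\qquad s_{n+1}\equiv s_n+d(n)\pmod p,
\]
and, since $f(n)$ has exactly $d(n)$ digits, each $\gamma_{s_n}$ lies in a fixed \emph{finite} set $F\subset[1/b,1)$ of positive rationals.

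First I would rule out $c=\infty$. Then $f(n+1)/f(n)\to\infty$, whence $e_n:=d(n+1)-d(n)\to\infty$, while the hypothesis $g(n+1)/g(n)\to1$ forces $f(n+2)f(n)/f(n+1)^{2}\to1$. Substituting the displayed formula and setting $u_n=\gamma_{s_{n+1}}/\gamma_{s_n}$ turns this into $(u_{n+1}/u_n)\,b^{e_{n+1}-e_n}\to1$. As $u_{n+1}/u_n$ ranges over a finite set of positive rationals bounded away from $0$ and $\infty$, the exponent $e_{n+1}-e_n$ is bounded, and for all large $n$ one gets $u_{n+1}/u_n=b^{\,e_n-e_{n+1}}$ exactly, so $u_nb^{e_n}$ is eventually constant; but $u_n\in(1/b,b)$ and $b^{e_n}\to\infty$, a contradiction. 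Hence $c<\infty$, and then $e_n$ is bounded.

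It remains to treat $c<\infty$. If $e_n$ is eventually constant, say $e_n\equiv j$, then $d(n)=jn+\mathcal{O}(1)$ and $(s_n)$ is eventually periodic; $f(n+1)/f(n)\to c$ forces $\gamma_{s_{n+1}}/\gamma_{s_n}$ to be eventually the constant $c/b^{j}$, and running this once around the eventual period $m$ of $(s_n)$ gives $(c/b^{j})^{m}=1$, hence $c=b^{j}$ (and $j\ge1$ here since $d(n)\to\infty$); then $\gamma_{s_n}$ is eventually constant and the displayed formula yields $f(n+1)=b^{j}f(n)+\mathcal{O}(1)$ and $g\to b^{j}$, as desired. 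So suppose $e_n$ is not eventually constant; being bounded it attains two values infinitely often, and along $\{n:e_n=\ell\}$ the ratio $\gamma_{s_{n+1}}/\gamma_{s_n}\to c/b^{\ell}$, which must lie in the finite set of ratios of elements of $F$, forcing $c\in(b^{\ell-1},b^{\ell+1})$. So the values of $e_n$ attained infinitely often are two consecutive integers $j_1,j_1+1$, and $c$ lies strictly between $b^{j_1}$ and $b^{j_1+1}$; in particular, for large $n$, $\gamma_{s_{n+1}}/\gamma_{s_n}=c/b^{e_n}$ exactly, and telescoping from some $n_0$ gives
\[
\gamma_{s_{n+1}}=\gamma_{s_{n_0}}\,c^{\,n+1-n_0}\,b^{\,d(n_0)-d(n+1)}.
\]
The left side lies in the finite set $F$, so two indices produce the same value, giving $c=b^{t}$ with $t$ rational and $t\in(j_1,j_1+1)$, say $t=u/v$, $v\ge2$, $\gcd(u,v)=1$. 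This is where $b$ being no proper power is essential: it makes $b^{w}$ rational (for rational $w$) only when $w\in\Z$, so the right-hand side above, namely $\gamma_{s_{n_0}}b^{(n+1-n_0)u/v+d(n_0)-d(n+1)}$, is rational only when $v\mid n+1-n_0$, which fails for most $n$ — contradicting $\gamma_{s_{n+1}}\in F\subset\Q$. Hence $e_n$ is eventually constant, completing the proof.

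I expect the last paragraph to carry the real difficulty. Everything rests on the rigidity $f(n)=\gamma_{s_n}b^{d(n)}+\mathcal{O}(1)$ with $\gamma_{s_n}$ confined to a finite set, and on checking that the leading digit of each block is nonzero so that $\gamma_{s_n}\ge1/b$; after that, the crux is excluding a Beatty-type oscillation of the block lengths $d(n)$ about a non-integer slope $\log_b c$ when $c$, though finite, is not a power of $b$, which is precisely where one needs the irrationality of $b^{u/v}$ guaranteed by ``$b$ not a proper power''. Ruling out $c=\infty$ is by comparison soft, using only the slow-variation bound $g(n+1)/g(n)\to1$ together with the boundedness of the $\gamma$-ratios.
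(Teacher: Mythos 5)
Your proof is correct, and it takes a genuinely different route from the paper's. The paper works with $\log_b f(n)\bmod 1$: since the period of $\alpha$ admits only $p$ cyclic shifts, this quantity has at most $p$ limit points, which (together with $g(n+1)/g(n)\to1$) yields $g(n)\to c$; the argument then splits on whether $\log_b c$ is rational or irrational, in the rational case exhibiting $c$ as a ratio of two shifted copies of $\alpha$ (hence rational, hence a power of $b$ because $b$ is not a proper power), and in the irrational case invoking equidistribution of $(n\log_b c/p)\bmod 1$ to produce infinitely many $n$ for which $f(n)$ is an initial segment of $f(n+1)$, forcing $c$ to be a power of $b$ and giving a contradiction. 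You instead encode the periodicity as the exact rigidity $f(n)=\gamma_{s_n}b^{d(n)}+\mathcal{O}(1)$ with $\gamma_{s_n}$ confined to a finite set of rationals in $[1/b,1)$, and split on whether the digit-length increments $e_n$ stabilize; the unstable case is killed by pigeonhole on that finite set together with the irrationality of $b^{u/v}$ for $v\geq2$ and $b$ not a proper power (indeed, at that point you have already shown $c=\gamma_{s_{n+1}}b^{e_n}/\gamma_{s_n}$ is rational, so the contradiction with $c=b^{u/v}$ is immediate, making your last telescoping step slightly more than is needed). Your version is more self-contained — it avoids equidistribution entirely, makes the role of the hypothesis on $b$ completely explicit, and gets the existence of $\lim g(n)$ cheaply from the stated monotonicity of $g$ — at the cost of a longer case analysis on $e_n$; the paper's limit-point argument is shorter up front and isolates the dichotomy rational/irrational $\log_b c$, which is perhaps the more conceptual invariant. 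Both proofs ultimately rest on the same finiteness supplied by periodicity, but the decompositions and the closing contradictions are different.
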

Note that there do exist functions $f, g$ such that $\alpha$ is rational,
for example, taking $b=10$, $f(n)=\frac{10^n-1}{9}$, and $g(n)\to10$ we find
$\alpha=\frac{1}{9}$. For the function $f(n)=a^n$, $a\in\N$, $a\geq 2$, this
result was proven for base $b=10$ by Mahler \cite{Mahler} and for
arbitrary $b$, including the case of $b$ being a proper power,
by Bundschuh \cite{Bund}. 

Denote by $p_n$ the $n$-th prime number, and, for an integer $k\geq 0$,
define the series $S_k$ by
\[
S_k := \sum_{n=1}^\infty \frac{p_n^k}{n!}.
\]
P. Erd{\"o}s\cite{Erd} stated that $S_k$ is irrational and gave a proof
for $k=1$. However, it appears that, for $k>1$, no proof has appeared
in print. 
Our last result is the following.
\begin{Theo}
The real numbers $1, S_0, S_1, S_2, \ldots$ are $\Q$-linearly independent.
\end{Theo}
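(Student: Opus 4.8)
\emph{Proof proposal.} The plan is to reduce the statement to an irrationality assertion about a single factorial series, pass to its $N!$‑tail, peel off the dominant part, and finish with a uniform‑distribution argument. A hypothetical nontrivial relation $c_{-1}+\sum_{k\ge0}c_kS_k=0$ with $c_i\in\Z$ gives, on setting $m:=\max\{k:c_k\ne0\}$ and $P:=\sum_{k=0}^mc_kx^k$, that $\sum_{n\ge1}P(p_n)/n!=-c_{-1}\in\Z$, where $P\in\Z[x]$ is nonzero of degree $m$; replacing $P$ by $-P$ if necessary we may take its leading coefficient positive. So it suffices to show $\sum_{n\ge1}P(p_n)/n!\notin\Q$ for every such $P$ (the case $m=0$ is just the irrationality of $e$). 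Suppose it equals $a/b\in\Q$. Then for $N\ge b$ the number $R_N:=N!\sum_{n>N}P(p_n)/n!$ is an integer, positive for large $N$ since $P(p_n)\to\infty$; iterating the exact identity $R_N=(P(p_{N+1})+R_{N+1})/(N+1)$ exactly $m$ times gives $R_N=F_N+\varepsilon_N$ with
\[
F_N:=\sum_{j=1}^{m}\frac{P(p_{N+j})}{(N+1)(N+2)\cdots(N+j)},\qquad
\varepsilon_N:=\frac{R_{N+m}}{(N+1)(N+2)\cdots(N+m)} .
\]
Since $p_n\sim n\log n$ (prime number theorem), $R_{N+m}\asymp N^{m-1}(\log N)^m$ while the denominator of $\varepsilon_N$ is $\asymp N^m$, so $0<\varepsilon_N\ll(\log N)^m/N$; as $R_N\in\Z$ this forces $\{F_N\}=1-\varepsilon_N\to1$. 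The whole problem is thus to show that $(F_N)$ is \emph{not} eventually this close to $\Z$ from below: it suffices to exhibit infinitely many $N$ with $\{F_N\}\le\tfrac12$.

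For $m=1$ this is Erd\H{o}s's case. Here $F_N=c_1p_{N+1}/(N+1)+c_0/(N+1)$ and $p_{N+1}/(N+1)=\log(N+1)+\log\log(N+1)-1+o(1)$, so $c_1(\log N+\log\log N-1)$ tends to $\pm\infty$ with consecutive differences $\to0$ (using $p_{N+1}-p_N=o(N)$); its fractional part is therefore dense in $[0,1)$ and lies in $[\tfrac14,\tfrac34]$ for infinitely many $N$, contradicting $\{F_N\}\to1$.

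For $m\ge2$ this is the crux, and it is where the methods from the theory of uniform distribution enter: one shows $(F_N)$ is uniformly distributed modulo $1$, which contradicts $\{F_N\}\to1$ (indeed it forces $\sum_{N\le X}(1-\{F_N\})\gg X$, whereas $\sum_{N\le X}(1-\{F_N\})=\sum_{N\le X}\varepsilon_N=O((\log X)^{m+1})$). No elementary argument is available: $F_N\asymp N^{m-1}(\log N)^m$ has increments $F_{N+1}-F_N\asymp N^{m-2}(\log N)^m\to\infty$, ruling out ``small step'' reasoning, and the prime irregularities $p_{N+j}-\rho(N+j)$ (for any smooth function $\rho$ with $\rho(n)\approx p_n$) are amplified in $F_N$ by the factor $P'(p_{N+1})/(N+1)\asymp N^{m-2}(\log N)^{m-1}\gg1$, so $\{F_N\}$ is not accessible by smooth approximation either. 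One is forced to estimate the Weyl sums $\sum_{N\le X}\exp(2\pi ihF_N)$, $h\ne0$: the phase is dominated by the leading term $c_mp_{N+1}^{m}/(N+1)$, and re‑indexing by the prime $q=p_{N+1}$ (so that $N+1=\pi(q)$) turns this into $\sum_{q\le Q}\exp(2\pi i\alpha\,q^{m}/\pi(q))$, an exponential sum over primes whose phase $\alpha q^m/\pi(q)$ is slowly varying and essentially non‑polynomial, hence $o(\pi(Q))$ by van der Corput's $B$‑process combined with a Vinogradov‑type handling of the prime variable (Vaughan's identity). Carrying the lower‑order terms of $F_N$ uniformly through the same estimate — and verifying that they cannot conspire to cancel the main term — yields the equidistribution of $(F_N)$ and the contradiction. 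The main obstacle is precisely this exponential‑sum estimate over primes for the non‑polynomial phase $q^{m}/\pi(q)$, together with the bookkeeping needed to pass from its leading term to the full sum $F_N$.
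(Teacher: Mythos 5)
Your reduction to the irrationality of $\sum_n P(p_n)/n!$, the truncation $R_N=F_N+\varepsilon_N$, and the conclusion $\{F_N\}=1-\varepsilon_N\to 1$ all match the paper's setup, and your diagnosis that smooth approximation of $p_n$ cannot reach $\{F_N\}$ for $m\ge2$ is correct. The gap is exactly at the step you flag as the crux: the claimed estimate $\sum_{q\le Q}e\bigl(\alpha q^m/\pi(q)\bigr)=o(\pi(Q))$ is not obtainable by van der Corput's $B$-process plus Vaughan's identity, and is in fact out of reach. Re-indexing does not remove the difficulty, because the phase still contains $\pi(q)$ (equivalently, $\sum_n e(\alpha p_n^m/n)$ still contains $p_n$): writing $p_n=\rho(n)+E(n)$ with $\rho$ smooth, the phase is $\rho(n)^m/n+m\rho(n)^{m-1}E(n)/n+\cdots$, whose second term has size $(\log n)^{m-1}E(n)$. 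For $m\ge2$ one would therefore need to control the fluctuation $E(n)$ to within $1/\log n$, i.e.\ know the $n$-th prime essentially exactly, which no form of the prime number theorem provides. Vaughan's identity applies to sums over primes against a fixed smooth phase, not to phases involving $\pi(q)$, and the van der Corput machinery requires a phase that is smooth in the summation variable. So the equidistribution of $(F_N)$ cannot be established along these lines, and your argument has no route to the contradiction when $m\ge2$.

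The paper circumvents precisely this obstruction by an elimination procedure that your sketch lacks. It writes $F^{(0)}(n)=\sum_{\nu,\mu}P^{(0)}_{\nu\mu}(\delta_n,\ldots,\delta_{n+k-1})\,p_n^{\nu}/n^{\mu}$ with $\delta_n=p_{n+1}-p_n$, and repeatedly forms the combinations $P_{\nu_0\mu_0}(\delta_{n+1},\ldots)F^{(i)}(n)-P_{\nu_0\mu_0}(\delta_n,\ldots)F^{(i)}(n+1)$, which (since $p_{n+1}=p_n+\delta_n$) cancel the currently dominant off-diagonal term and, after finitely many steps, leave only the diagonal terms $p_n^{i}/n^{i}\asymp\log^{i}n$. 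For those, replacing $p_n$ by the inverse function of $\mathrm{li}$ costs only $O(e^{-c\sqrt{\log n}})$, so the Erd\H os--Tur\'an inequality together with the $q=0$ case of Weyl--van der Corput yields a genuine discrepancy bound (Lemma~\ref{lem:disclow}). Two further ingredients are then indispensable and have no analogue in your proposal: a Selberg-sieve bound on patterns of consecutive prime gaps (Lemmas~\ref{Lem:Selberg} and~\ref{lem:sieve}), which guarantees that the coefficient polynomials in the $\delta$'s are nonzero for almost all $n$, and the algebraic Lemma~\ref{lem:Pnonvan}, which ensures the elimination never annihilates the whole expression. Without some such device for removing the terms $p_n^{\nu}/n^{\mu}$ with $\nu>\mu$ before any analytic estimate is attempted, the proof cannot be completed.
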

Our proofs will use the following results from the theory of
equidistribution, confer, for example, \cite[Theorem~2.8]{GK} and
\cite[II, Theorem~2.5]{KN}. 
\begin{lem}[Weyl-Van der Corput]
\label{Lem:WeylCorput}
Let $f$ be a function, which is $(q+2)$-times continuously differentiable,
and suppose that on the interval $[1, N]$ we have
$\lambda\leq f^{(q+2)}(t)\leq\alpha\lambda$. Then for $q=0$ we have
\[
\sum_{n=1}^{N} e(f(n)) \ll \alpha N\lambda^{1/2} + \lambda^{-1/2},
\]
whereas for $q\geq 1$ we have
\[
\sum_{n=1}^{N} e(f(n)) \ll N(\alpha^2\lambda)^{\frac{1}{4Q-2}} +
  N^{1-\frac{1}{2Q}}\alpha^{\frac{1}{2Q}} + N^{1-\frac{1}{2Q}+\frac{1}{Q^2}}\lambda^{-\frac{1}{2Q}},
\]
where $Q=2^q$.
\end{lem}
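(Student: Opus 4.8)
The plan is to derive both estimates from the two classical devices of van der Corput's method: the first-derivative (Kuzmin--Landau) bound, which powers the base case $q=0$, and the Weyl--van der Corput squaring inequality (the ``A-process''), which drives an induction that peels off one order of derivative at a time until the base case applies. Throughout, $\|\cdot\|$ denotes distance to the nearest integer. The engine for the base case is the Kuzmin--Landau inequality: if $g$ is real with $g'$ monotone on a subinterval $J\subseteq[1,N]$ and $\|g'\|\ge\delta$ throughout $J$, then $\sum_{n\in J}e(g(n))\ll\delta^{-1}$, which I would obtain by Abel summation from the fact that the partial sums of $e(g(n))$ behave like a geometric progression with ratio bounded away from $1$.

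\emph{Base case ($q=0$).} Here $\lambda\le f''\le\alpha\lambda$, so $f'$ is strictly increasing and grows by at least $\lambda$ per unit length, whence the range of $f'$ on $[1,N]$ has length $\le\alpha\lambda N$. I would partition $[1,N]$ at the preimages of the integers under $f'$; this produces $\ll 1+\alpha\lambda N$ subintervals. On each subinterval I delete the two end-zones where $\|f'\|<\delta$: because $f''\ge\lambda$ these zones have length $\le\delta/\lambda$ and contain $\ll 1+\delta/\lambda$ integers, which are counted trivially, while the central portion has $\|f'\|\ge\delta$ with $f'$ monotone, so Kuzmin--Landau contributes $\ll\delta^{-1}$. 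Summing over the subintervals and choosing $\delta=\lambda^{1/2}$ gives $\ll\alpha N\lambda^{1/2}+\lambda^{-1/2}$ (the second term covering the case when $f'$ passes through no integer, so that a single interval of length $N$ contributes $\ll\lambda^{-1/2}$), using that $\lambda\le1$ is the only nontrivial range.

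\emph{Inductive step.} The reduction rests on the Weyl--van der Corput inequality: for $1\le H\le N$,
\[
\Big|\sum_{n=1}^N e(f(n))\Big|^2 \ll \frac{N^2}{H} + \frac{N}{H}\sum_{h=1}^{H-1}\Big|\sum_{n} e\big(f(n+h)-f(n)\big)\Big|,
\]
the inner sum running over $n$ with $1\le n,\,n+h\le N$. The crucial observation is that the differenced phase $\Delta_h f(t):=f(t+h)-f(t)$ satisfies $(\Delta_h f)^{(q+1)}(t)=\int_t^{t+h}f^{(q+2)}\in[h\lambda,\alpha h\lambda]$, so each differencing lowers the order of the controlled derivative by one, multiplies its size by $h$, and—this is the point—leaves the ratio $\alpha$ unchanged. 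Applying the squaring inequality $q$ times (with parameters $H_1,\dots,H_q$, which I would take equal to a single $H$) reduces every innermost sum to a second-derivative sum with second derivative in $[\lambda h_1\cdots h_q,\,\alpha\lambda h_1\cdots h_q]$, to which the base case applies, yielding $\ll\alpha N(\lambda h_1\cdots h_q)^{1/2}+(\lambda h_1\cdots h_q)^{-1/2}$.

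\emph{Assembly and main obstacle.} Substituting the base-case bound, summing the resulting products over $h_1,\dots,h_q\in[1,H)$, collecting the diagonal and trivial contributions $N^2/H$ accumulated at each squaring, and extracting the $2^q$-th root, one is left with a bound in $H$. Optimizing $H$ to balance the terms reproduces exactly the three stated contributions $N(\alpha^2\lambda)^{1/(4Q-2)}$, $N^{1-1/(2Q)}\alpha^{1/(2Q)}$ and $N^{1-1/(2Q)+1/Q^2}\lambda^{-1/(2Q)}$ with $Q=2^q$; note $4Q-2=2(2Q-1)$, and the power $\alpha^2$ is the single factor $\alpha$ carried through $q$ squarings and then optimized. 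The principal difficulty is precisely this bookkeeping: tracking how the error $N^2/H$ and the off-diagonal sums propagate through the $q$-fold iteration and performing the one optimization in $H$ that balances all three terms simultaneously. The delicate point within it is verifying that the ratio $\alpha$ is genuinely preserved—not amplified—at every differencing step, since any slack there would inflate the final dependence on $\alpha$ beyond what is claimed.
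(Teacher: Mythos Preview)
The paper does not supply a proof of this lemma at all; it is stated as a quotation from the literature, with the pointer ``confer, for example, \cite[Theorem~2.8]{GK} and \cite[II, Theorem~2.5]{KN}'' preceding Lemmas~\ref{Lem:WeylCorput} and~\ref{Lem:ErdTuran}. There is thus nothing to compare your argument against on the paper's side.

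That said, your sketch is exactly the classical route taken in the cited source (Graham--Kolesnik): the Kuzmin--Landau first-derivative bound plus the second-derivative partitioning for the case $q=0$, and iterated Weyl--van der Corput differencing (the $A$-process) to descend from order $q+2$ to order $2$, exploiting that $(\Delta_h f)^{(q+1)}\in[h\lambda,\alpha h\lambda]$ preserves the ratio $\alpha$. Your identification of the bookkeeping in the $q$-fold squaring and the single optimisation in $H$ as the main labour is accurate; filling it in is routine but tedious, and the references do it in full.
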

\begin{lem}[Erd\H os-Tur\'an]
\label{Lem:ErdTuran}
Let $(x_n)_{n=1}^N$ be a sequence of real numbers in the interval $[0,
1]$. Then the discrepancy $D_N$ of this sequence is bounded above by
\[
D_N \ll \frac{N}{H} + \sum_{1\leq h\leq H}\left|\sum_{n=1}^N e(hx_n)\right|
\]
\end{lem}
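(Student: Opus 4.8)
The plan is to prove the inequality by sandwiching the indicator function of an arbitrary subinterval of $[0,1)$ between two trigonometric polynomials of degree at most $H$ whose Fourier coefficients are controlled, and then to test these polynomials against the finite sequence $(x_n)$. Since the asserted right-hand side carries no factor $1/N$, I read $D_N$ as the un-normalised discrepancy
\[
D_N = \sup_{0\leq a<b\leq 1}\Bigl|\sum_{n=1}^N \mathbf{1}_{[a,b)}(x_n) - N(b-a)\Bigr|
\]
(the normalised Kuipers–Niederreiter version follows on dividing by $N$). It then suffices to bound the counting deviation inside the supremum uniformly in $a$ and $b$ by the right-hand side of the statement.

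First I would invoke the Beurling–Selberg extremal functions, in the sharp form due to Vaaler: for each interval $I=[a,b)\subseteq[0,1)$ and each integer $H\geq 1$ there exist real trigonometric polynomials $V_H^+$ and $V_H^-$ of degree at most $H$, periodic of period $1$, which majorise respectively minorise the $1$-periodic extension of $\mathbf{1}_{[a,b)}$, so that
\[
V_H^-(x)\leq \mathbf{1}_{[a,b)}(x)\leq V_H^+(x)\quad(x\in\R),\qquad \int_0^1 V_H^{\pm}(x)\,dx = (b-a)\pm\frac{1}{H+1},
\]
while for the non-zero Fourier coefficients one has
\[
\bigl|\widehat{V_H^{\pm}}(h)\bigr| \ll \frac{1}{H+1}+\frac{1}{|h|}\ll \frac{1}{|h|}\qquad(1\leq|h|\leq H).
\]
These polynomials are assembled from the Fejér kernel together with Beurling's majorant of the signum function; the periodised signum majorant supplies the pointwise one-sided bounds, while the Fejér kernel enforces the degree restriction and pins down the mean value.

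Then I would expand $\sum_{n=1}^N V_H^{\pm}(x_n)$ in its finite Fourier series. Writing $T(h)=\sum_{n=1}^N e(hx_n)$, the constant coefficient contributes $N\bigl((b-a)\pm\frac{1}{H+1}\bigr)$, while the remaining terms are bounded, using $|T(-h)|=|T(h)|$, by
\[
\sum_{1\leq h\leq H}\bigl|\widehat{V_H^{\pm}}(h)\bigr|\,\bigl(|T(h)|+|T(-h)|\bigr)\ll \sum_{1\leq h\leq H}\frac{1}{h}\,|T(h)| \leq \sum_{1\leq h\leq H}|T(h)|.
\]
Because $V_H^-\leq\mathbf{1}_{[a,b)}\leq V_H^+$ pointwise, evaluating at $x_1,\dots,x_N$ and summing sandwiches $\sum_{n=1}^N\mathbf{1}_{[a,b)}(x_n)$ between $\sum_n V_H^-(x_n)$ and $\sum_n V_H^+(x_n)$. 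Combining the two one-sided estimates gives
\[
\Bigl|\sum_{n=1}^N\mathbf{1}_{[a,b)}(x_n)-N(b-a)\Bigr| \ll \frac{N}{H}+\sum_{1\leq h\leq H}|T(h)|,
\]
and taking the supremum over all intervals $[a,b)$ yields the claim.

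The only genuinely delicate point is the construction of the one-sided approximants $V_H^{\pm}$ with simultaneous control of their sign, their mean value, and the decay of their Fourier coefficients; this is the heart of the matter. It cannot be replaced by a naive $L^1$-approximation—for instance by convolving $\mathbf{1}_{[a,b)}$ with the Fejér kernel—because the approximation error of any such smoothing concentrates near the two endpoints of $I$ and is not controlled pointwise, so the smoothed sum cannot be compared with the genuine counting function. I would therefore either reproduce the Beurling–Selberg–Vaaler construction or simply cite it, as is done for the analogous estimate in \cite{KN}. I note finally that for the inequality exactly as stated—without the weight $1/h$—the sharp decay $|\widehat{V_H^{\pm}}(h)|\ll 1/|h|$ is not actually needed: the trivial bound $|\widehat{V_H^{\pm}}(h)|\ll 1$ already suffices, which slightly relaxes the properties the extremal polynomials must satisfy.
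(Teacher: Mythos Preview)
The paper does not actually prove this lemma: it is stated as a quotation from the literature, with a reference to \cite[II, Theorem~2.5]{KN} given just before the two lemmas. So there is no proof in the paper to compare against.

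Your argument is correct. The route via the Beurling--Selberg--Vaaler extremal polynomials is one of the standard modern proofs; it has the virtue of giving a sharp explicit constant and, as you observe, naturally delivers the stronger inequality with the weight $1/h$ on each exponential sum. The proof in Kuipers--Niederreiter (and the original Erd\H os--Tur\'an argument) is more elementary: it avoids the Beurling extremal function by working directly with an explicit Fej\'er-type kernel and an auxiliary smoothing, at the price of a worse absolute constant. Either approach yields the statement as written, and your closing remark that the undecorated bound $|\widehat{V_H^{\pm}}(h)|\ll 1$ already suffices for the version without the $1/h$ weight is well taken.
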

\begin{proof}[Proof of Theorem 1.]
It suffices to show that for all tuples $a_1, \ldots, a_k\in\Z\setminus\{0\}$,
$0\leq\lambda_1<\lambda_2<\dots<\lambda_k$, such that no $\lambda_i$ is an integer $\geq 2$, the real number
\[
S = \sum_{n\geq0} \frac{1}{n!}\big(a_1[n^{\lambda_1}] + \dots + a_k[n^{\lambda_k}]\big)
\]
is irrational. Moreover, we may assume that at least one of the $\lambda_i$
is not 0. Suppose that $S=\frac{p}{q}$, and let $n\geq q$ be an
integer. Then $n!\cdot S$ is integral, and we deduce
\[
\left\|\sum_{\nu\geq1}\frac{1}{(n+1)\dots(n+\nu)} \big(a_1\big[(n+\nu)^{\lambda_1}\big] + \dots
  + a_k\big[(n+\nu)^{\lambda_k}\big]\big)\right\| = 0,
\]
where $\|\cdot\|$ denotes the difference to the nearest integer.
Set $M=[\lambda_k]+1$. Then truncating the series at $\nu=M$ yields an error
of size $\mathcal{O}(n^{-1})$. Neglecting the rounding introduces an
error of the same magnitude, and we obtain
\begin{equation}
\label{eq:disclower}
\left\|\sum_{\nu=1}^M\frac{1}{(n+1)\dots(n+\nu)} \big(a_1(n+\nu)^{\lambda_1} + \dots
  + a_k(n+\nu)^{\lambda_k}\big)\right\| \ll \frac{1}{n}
\end{equation}
If $\lambda_k<1$, the sum collapses to a single term, which tends to 0, and
we obtain $n^{\lambda_k-1}\ll n^{-1}$, contradicting the assumption that at
least one of the $\lambda_i$ is nonzero. If $\lambda_k>1$, define
\[
f(t) = \sum_{\nu=1}^M\frac{1}{(t+1)\dots(t+\nu)} \big(a_1(t+\nu)^{\lambda_1} + \dots
  + a_k(t+\nu)^{\lambda_k}\big).
\]
For $t>M$, $f$ is analytic, and, since $\lambda_k$ is not integral, there
exist some $K\in\N$ such that $f^{(K+1)}(t)$ and $f^{(K+2)}(t)$ do not
change sign for $t>t_0$, and
\[
\lim_{t\to\infty} \frac{f^{(K)}(t)}{t} = 0,\qquad \lim_{t\to\infty} \frac{1}{tf^{(K+1)}(t)} = 0.
\]
Lemma~\ref{Lem:WeylCorput} now implies that the sequence $f(n)$ is
equidistributed modulo 1; confer
e.g. \cite[pp. 36--39]{Hlawka}. However, the latter statement clearly
contradicts (\ref{eq:disclower}), which proves our theorem.
\end{proof}
\begin{proof}[Proof of Proposition 1.]
Monotonicity is clear. Suppose that $\lambda_2>\lambda_1$. Then for $n\geq n_0$ we
have $n^{\lambda_2}>n^{\lambda_1}+1$, and therefore $[n^{\lambda_2}]\geq[n^{\lambda_1}]+1$, which
implies $S_{\lambda_2}\geq S_{\lambda_1} + \frac{1}{n_0!}$, and we conclude that the
map $\lambda\mapsto S_\lambda$ is injective. Hence, the imagy of this map has the same
cardinality as its range, which is the continuum. We now prove continuity
from the right. For each $n$ there is some $\epsilon_n$ such that $[n^\lambda]=[n^t]$ for all
$t\in[\lambda, \lambda+\epsilon]$. Let $N$ be a sufficiently large integer, and set
$\epsilon=\min(1, \{\epsilon_n: n\leq N\})$. Then we have for $t\in[\lambda, \lambda+\epsilon]$ the bound
\[
S_t-S_\lambda = \sum_{n=1}^\infty \frac{[n^t]-[n^\lambda]}{n!} = \sum_{n=N+1}^\infty
\frac{[n^t]-[n^\lambda]}{n!} \leq \sum_{n=N+1}^\infty \frac{n^{\lambda+1}}{n!} \leq 2\frac{N^\lambda}{N!},
\]
which tends to 0 as $N\to\infty$. Hence, if $\lambda_i\searrow\lambda$, then $S_{\lambda_i}\searrow S_\lambda$, and
$S_\lambda$ is continuous from the right. The fact that the image of
$S_\lambda$ is totally disconnected follows from Theorem~1, since a
connected component would contain some interval of positive length,
and therefore infinitely many rational numbers, only finitely many of
which are excluded by the condition $\lambda\not\in\Z$. 

To estimate the Hausdorff dimension, let $N$ be an integer, and define
$S_\lambda^N$ as the partial sum $\sum_{n=1}^N \frac{[n^\lambda]}{n!}$. The image
$\mathcal{I}_N$ of an interval $[t, t+1]$ under the map $\lambda\mapsto S_\lambda^N$
consists of finitely many points, more precisely, the cardinality of
the image of $[t, t+1]$ is at most equal to the number of values
$\lambda\in[t, t+1]$, such that $n^\lambda$ is integral for some $n\leq N$, and this
quantity is bounded above by $N^{t+2}$. Clearly, $S_\lambda^N\leq S_\lambda$, and,
for $\lambda\in[t, t+1]$ we have for every fixed $A>0$ and $N$ sufficiently
large the estimate
\[
|S_\lambda^N-S_\lambda| = \sum_{n>N}\frac{[n^\lambda]}{n!} < \sum_{n>N}\frac{n^{t+1}}{n!} <
\frac{N^{t+1}}{N!} < N^{-A},
\]
Hence, for $N$ large the image $\mathcal{I}$ of $[t, t+1]$ under the
map $\lambda\mapsto S_\lambda$ can be convered by $N^{t+2}$ intervals of length $N^{-A}$
each, thus, the Hausdorff dimension of $\mathcal{I}$ is bounded above
by $\frac{t+2}{A}$ for any $A$, and therefore the Hausdorff dimension
is 0.
\end{proof}
\begin{proof}[Proof of Theorem 2.]
For a positive integer $n$, denote by $\ell(n)$ the number of digits of $n$,
and by $\log n$ the logarithm in base $b$. Suppose that $\alpha$ is
rational. Then the sequence of digits of $\alpha$ is ultimately
periodic with period $p$, say.

There are only $p$ cyclic permutations of the digits of the period of
$\alpha$, hence, $\log f(n) \bmod 1$ has at most $p$ limit points; order
these limit points as
$0\leq x_1<x_2<\dots<x_m<1$. In particular, for every $\epsilon>0$ there exists
some $n_0$ such that for $n>n_0$ we have $f(n)-x_i\bmod 1 <\epsilon$ for some
$i$ depending on $n$. Moreover, increasing $n_0$, if necessary, we may
assume that $|\log f(n+1)-\log f(n)-\log g(n)|<\epsilon$, and obtain
\[
\log g(n)\bmod 1 = \log f(n+1)-\log f(n)\bmod 1 +\delta_1  = x_i - x_j +
\delta_2
\]
for some indices $i, j$, and real numbers $0\leq\delta_1, \delta_2<\epsilon$.
Hence, $\log g(n)\bmod 1$ has finitely many limit points as
well. However, since $\log g(n+1)-\log g(n)\to 0$, this
implies that $\log g(n)$ converges, thus, $g(n)\to c$ for some constant $c$.

We now distinguish two cases, depending on whether $\frac{\log c}{\log
  b}$ is rational or not. 

Suppose that $\frac{\log c}{\log b}$ is rational. Since
$\frac{f(n+1)}{f(n)}$ converges, all but finitely many $n$ have the
property that there are infinitely many $m$ such that
$\ell(f(n))\equiv\ell(f(m))\pmod{p}$, and that $f(n)$ and $f(m)$ begin
with the same $p$ digits. This implies in particular that $f(n+1)$ and
$f(m+1)$ begin with the same $p$ digits. Since
$\frac{f(m+1)}{f(m)}\to c$, and there are infinitely many $m$
at our disposal, we may choose $m$ so large that 
\[
\left|\frac{f(m+1)}{f(m)}-c\right|<\frac{1}{f(n)},
\]
while periodicity implies
\[
\left|\frac{f(m+1)}{f(m)}-\frac{f(n+1)}{f(n)}\right|<\frac{1}{f(n)},
\]
hence, $f(n+1)=cf(n)+\mathcal{O}(1)$ holds true for all $n$.
Furthermore, for a sequence $n_i$ such that $\ell(f(n_i))\pmod{p}$ and
the first $p$ digits of $f(n_i)$ are constant, the rational numbers
$\beta$ and $\gamma$ obtained from $\alpha$ by shifting the decimal
point right in front of the first digit belonging to $f(n_i)$
resp. $f(n_i+1)$ do not depend on $i$. Hence, 
\[
c=\lim_{i\to\infty}\frac{f(n_i+1)}{f(n_i)} =
\lim_{i\to\infty}\frac{\beta b^{\ell(f(n_i+1))}+\mathcal{O}(1)}
{\gamma b^{\ell(f(n_i))} + \mathcal{O}(1)} = \frac{\beta b^{\ell(f(n_i)+1) - \ell(f(n_i)+1)}}{\gamma}
\]
is rational. Hence, $c$ is both rational and a rational power of $b$,
which either implies that $b$ is  a proper power, or that $c$ is a
proper power of $b$. Hence, our theorem holds true in this case.

Now we suppose that $\frac{\log c}{\log b}$ is irrational; we have to
show that this assumption leads to a contradiction.
For any irrational number $\alpha$ the sequence $(\frac{\alpha n}{p})_{n=1}^\infty$ is
equidistributed modulo 1, in particular, there are
infinitely many $n$ such that $\ell(f(n))$ is divisible by $p$ and
$f(n)$ does not have both its leading digits equal to $b-1$. For such
$n$, $f(n)$ 
and $f(n+1)$ begin with the same digits, in fact, $f(n)$ is an initial
segment of $f(n+1)$. Hence, $f(n+1)=f(n)\cdot b^k + \mathcal{O}(b^k)$
for some positive integer $k$. Since $g(n)\to c$, we deduce that $c$
itself is a power of $b$, contradicting the assumption that
$\frac{\log c}{\log b}$ is irrational, and the theorem is proven.
\end{proof}
For the proof of Theorem~3 we need some auxilliary
results. The first result is consequence of Selberg's sieve, confer,
e.g. \cite[Theorem~5.1]{HR}
\begin{lem}
\label{Lem:Selberg}
Let $0\leq a_1 < a_2 < \dots < a_k<N$ be a sequence of integers, and let
$\mathcal{N}\subseteq[x, 2x]$ be a set of integers such that for all
$n\in\mathcal{N}$ each of the integers $n+a_i$ is prime. Then
\[
|\mathcal{N}| \leq \frac{c_kx}{\log^{k+1}x} \prod_p
\left(1+\frac{1}{p}\right)^{k+2-\nu(p)},
\]
where $\nu(p)$ denotes the number of distinct residues $\bmod\;p$ among
\[
\{0, \Delta_0, \Delta_0+\Delta_1, \ldots, \Delta_0+\dots+\Delta_k\},
\]
and $\log_2$ denotes the iterated logarithm. In particular,
$|\mathcal{N}|\ll_k\frac{x\log_2^{k+2}}{\log^{k+1}x}$. 
\end{lem}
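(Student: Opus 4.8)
\emph{Proof of Lemma 3 (sketch).}
The plan is to recognise $\mathcal{N}$ as a subset of the set sifted from the interval $\mathcal{A}=\{n\in\Z:x\le n<2x\}$ by the $k$ linear forms $n\mapsto n+a_i$, and to read the bound off from the standard Selberg upper-bound sieve in the form of \cite[Theorem~5.1]{HR}. For each prime $p$ let $\Omega_p\subseteq\Z/p\Z$ be the set of residues $n\bmod p$ with $p\mid\prod_{i=1}^{k}(n+a_i)$; then $|\Omega_p|=\nu(p)$, the number of distinct residues among $-a_1,\dots,-a_k$, which up to a harmless translation is the count among the partial sums of the gaps $\Delta_i=a_{i+1}-a_i$ in the statement. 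If $n\in\mathcal{N}$ then every $n+a_i\ge x$ is prime, so no prime $p\le z:=x^{1/(2(k+2))}$ can divide $\prod_i(n+a_i)$, since such a prime factor would force $n+a_i=p<x$. Hence $\mathcal{N}\subseteq S(\mathcal{A},z)$, and it suffices to bound the sifted set.

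First I would check the sieve hypotheses for the multiplicative density $\omega(p)=\nu(p)$. One has $\nu(p)<p$, and $\nu(p)$ takes its generic value for all but the finitely many primes dividing the discriminant $D=\prod_{i<j}(a_j-a_i)$; this generic value fixes the dimension of the sieve, and hence the power of the logarithm in the main term. For the level of distribution nothing is needed beyond the trivial equidistribution of an interval: the number of $n\in\mathcal{A}$ in a fixed class $\bmod\,d$ is $x/d+O(1)$, so the remainder satisfies $|R_d|\le\nu(d)\ll_k(k+1)^{\omega(d)}$. Feeding $\omega(p)=\nu(p)$, $X=x$ and this remainder into the upper-bound sieve gives
\[
|\mathcal{N}|\le c_k\,x\prod_{p\le z}\Bigl(1-\frac{\nu(p)}{p}\Bigr)+\sum_{d<z^2}\mu^2(d)\,3^{\omega(d)}|R_d|,
\]
and with our choice of $z$ the error sum is $\ll\sum_{d<z^2}(3(k+1))^{\omega(d)}\ll z^2(\log z)^{O_k(1)}=x^{1/(k+2)}(\log x)^{O_k(1)}$, which is $o\bigl(x/\log^{k+1}x\bigr)$ and so is absorbed.

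It remains to recast the main term, a purely multiplicative computation. Factoring out the appropriate power of $(1-1/p)$,
\[
\prod_{p\le z}\Bigl(1-\frac{\nu(p)}{p}\Bigr)=\prod_{p\le z}\Bigl(1-\frac1p\Bigr)^{k+1}\cdot\prod_{p\le z}\frac{1-\nu(p)/p}{(1-1/p)^{k+1}},
\]
Mertens' theorem turns the first product into a quantity $\asymp(\log z)^{-(k+1)}\asymp(\log x)^{-(k+1)}$, while the second product converges, its generic factor being $1+O(1/p^2)$ once $\nu(p)$ is generic. Comparing $(1-1/p)^{-1}$ with $(1+1/p)$ lets this convergent contribution, together with the finitely many exceptional primes $p\mid D$, be written in the closed Euler-product form $\prod_p(1+1/p)^{k+2-\nu(p)}$ of the statement, which yields the displayed inequality. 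For the final ``in particular'' bound I would use $\nu(p)\ge1$, so that each exponent $k+2-\nu(p)$ is at most $k+1$ and only primes $p\mid D$ contribute a factor exceeding $1$; since $\prod_{p\mid D}(1+1/p)\ll\log\log D\ll\log\log x$, raising to a power $\le k+1$ produces the stated factor $\log_2^{k+2}$.

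The main obstacle is not any single estimate but the dimension bookkeeping: one must pin down the generic value of $\nu(p)$, since it alone decides the exponent of $\log x$ in the main term as well as the exponent $k+2-\nu(p)$ in the residual Euler product, and one must calibrate the sifting level $z$ so that the Selberg remainder stays strictly below the main term. Once the forms $n+a_i$ and their local density $\nu(p)$ are correctly identified, everything else is a routine specialisation of \cite[Theorem~5.1]{HR} combined with Mertens' theorem.

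\hfill$\Box$
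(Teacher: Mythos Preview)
The paper does not actually prove this lemma: it is stated as a direct consequence of \cite[Theorem~5.1]{HR} and no argument is written out. Your sketch is precisely the standard specialisation of that theorem to the linear forms $n+a_i$, followed by Mertens to extract the power of $\log x$ and isolate the residual Euler product, so your approach coincides with what the paper intends.

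One small caveat on the bookkeeping you yourself flag: with exactly $k$ forms $n+a_1,\dots,n+a_k$ the generic value of $\nu(p)$ is $k$, which would yield $(\log x)^{-k}$ rather than the $(\log x)^{-(k+1)}$ appearing in the displayed bound; the statement (and its later application in Lemma~\ref{lem:sieve}) implicitly involves one more primality condition than the list $a_1,\dots,a_k$ suggests, so the off-by-one is in the lemma's indexing rather than in your method.
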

For the rest of this article, set $\delta_n=p_{n+1}-p_n$. 
\begin{lem}
\label{lem:sieve}
Let $F\in\Z[x_0, \ldots, x_k]$ be a polynomial which does not vanish
identically. Then for almost all $n$ we have $F(\delta_n, \ldots,\delta_{n+k})\neq 0$.
\end{lem}
\begin{proof}
Neglecting
$\mathcal{O}\big(\frac{x}{\log_2 x}\big)$ indices at most, we
may assume that $\delta_i\leq\log x\log\log x$, $n\leq i\leq n+k$. For a
fixed tuple $\Delta_0, \ldots, \Delta_k$ satisfying
$\Delta_i\leq\log x\log_2 x$, the number of solutions $n$ of the
equations $\delta_{n+i}=\Delta_i, 0\leq i\leq k$, is bounded above by
the number $N$ of primes $p\in[p_x, p_{2x}]$ with the property that
$p+\Delta_0+\dots+\Delta_i$ is prime for all $0\leq i\leq k$, and from
Lemma~\ref{Lem:Selberg} we infer that this quantity is
$\mathcal{O}(\frac{x\log_2^{k+2} x}{\log^{k+1} x})$. Since $F$ does
not vanish identically, the number of tuples $(\Delta_0, \ldots,\Delta_k)$ such that
$F(\Delta_0, \ldots, \Delta_k)=0$ and $\Delta_i\leq\log x\log_2 x$ for all $i$, is of size
$\mathcal{O}(\log^k x\log_2^k x)$, hence, the total number of
solutions of the equation $F(\delta_n, \ldots, \delta_{n+k})=0$ is
$\ll\frac{x\log_2^{2k+2} x}{\log x}$, which is sufficiently small. 
\end{proof}
\begin{lem}
\label{lem:Pnonvan}
Let $k$ be a field, $P, Q\in k[X_1, \ldots, X_n]$ be polynomials, $\nu\neq 0$ an
integer, such that
\begin{multline*}
\nu X_1P(X_1, \ldots, X_n) + P(X_1, \ldots, X_n) Q(X_2, \ldots, X_{n+1})
\\
- P(X_2, \ldots,X_{n+1}) Q(X_1, \ldots, X_n)
\end{multline*}
vanishes identically. Then $P$ vanishes identically.
\end{lem}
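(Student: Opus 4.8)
The plan is to argue by contradiction. Suppose $P$ is not identically zero; I will deduce $\nu=0$, contrary to hypothesis. It is convenient to work inside $k[X_1,\ldots,X_{n+1}]$ and to write $\sigma$ for the substitution $X_i\mapsto X_{i+1}$, so that $P(X_2,\ldots,X_{n+1})=\sigma P$ and likewise for $Q$; the hypothesis then reads
\[
\nu X_1 P + P\,\sigma Q - \sigma P\,Q \;=\; 0 .
\]
If $Q\equiv 0$ this already gives $\nu X_1 P=0$, hence $P\equiv0$; so we may also assume $Q\not\equiv0$. The point is that the term $\nu X_1P$ breaks the symmetry between $P$ and $Q$ just enough to make a degree argument bite.

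The first step is to regard the identity in $k[X_2,\ldots,X_{n+1}][X_1]$. Since $\sigma P$ and $\sigma Q$ are free of $X_1$, the three summands have $X_1$-degrees $\deg_{X_1}P+1$, $\deg_{X_1}P$ and $\deg_{X_1}Q$, each occurring with a nonzero leading coefficient ($\nu\neq0$ is used here). Cancellation of the highest power of $X_1$ is therefore possible only when $\deg_{X_1}Q=\deg_{X_1}P+1$, and comparing the coefficients of $X_1^{\deg_{X_1}P+1}$ yields the auxiliary identity
\[
\nu\,\ell_P \;=\; \sigma P\cdot\ell_Q ,
\]
where $\ell_P,\ell_Q\in k[X_2,\ldots,X_n]$ denote the leading coefficients of $P$ and $Q$ with respect to $X_1$.

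The second step extracts from this that $P$ must be a nonzero constant. Every monomial of $\ell_P$ has total degree at most $\deg P-\deg_{X_1}P$, whereas $\deg(\sigma P)=\deg P$; since $\sigma P$ divides the nonzero polynomial $\nu\ell_P$ in $k[X_2,\ldots,X_{n+1}]$, we get $\deg P\le\deg P-\deg_{X_1}P$, so $\deg_{X_1}P=0$ and $\ell_P=P$. Feeding this back, $\nu P=\sigma P\cdot\ell_Q$, and comparing total degrees now forces $\ell_Q$ to be a nonzero scalar, i.e. $P=c\,\sigma P$ with $c\in k\setminus\{0\}$. But $\sigma$ raises by one the largest index of a variable actually occurring in a polynomial; so if $P$ were non-constant, $c\,\sigma P$ would involve a variable not present in $P$, which is absurd. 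Hence $P=a$ for some $a\in k\setminus\{0\}$, and then also $\sigma P=a$.

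With $P\equiv a\neq0$ the original identity collapses to $a\bigl(\nu X_1+\sigma Q-Q\bigr)=0$, that is,
\[
Q(X_1,\ldots,X_n) - Q(X_2,\ldots,X_{n+1}) \;=\; \nu X_1 .
\]
Specialising all of $X_1,\ldots,X_{n+1}$ to $1$ annihilates the left-hand side while the right-hand side becomes $\nu$; thus $\nu=0$ in $k$, the contradiction we wanted, and the lemma follows. I expect the delicate point to be the second step: one must combine the divisibility $\sigma P\mid\nu\ell_P$ with two different gradings — the $X_1$-degree and the total degree — and then exploit the index-shifting behaviour of $\sigma$; the first and last steps are routine manipulations.
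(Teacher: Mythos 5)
Your proof is correct, but it takes a genuinely different route from the paper's. The paper attacks the identity at the \emph{bottom} of the $X_1$-expansion: substituting $X_1=0$ kills the term $\nu X_1P$ and leaves $P(0,X_2,\ldots,X_n)Q(X_2,\ldots,X_{n+1})=P(X_2,\ldots,X_{n+1})Q(0,X_2,\ldots,X_n)$, which is read as the functional equation $R(0,X_2,\ldots,X_n)=R(X_2,\ldots,X_{n+1})$ for the rational function $R=P/Q$; iterating shows $R$ is constant, so $P\,\sigma Q-\sigma P\, Q\equiv 0$ and the hypothesis collapses to $\nu X_1P\equiv0$. You instead work at the \emph{top} of the $X_1$-expansion: matching leading coefficients forces $\deg_{X_1}Q=\deg_{X_1}P+1$ and $\nu\ell_P=\sigma P\cdot\ell_Q$, the two degree counts pin $P$ down to a nonzero scalar, and the evaluation at $(1,\ldots,1)$ gives the contradiction $\nu=0$. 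The two arguments are dual in spirit, but yours stays entirely inside the polynomial ring and thereby sidesteps a small rough spot in the paper's version --- the expression $R(0,X_2,\ldots,X_n)$ only makes sense when $Q(0,X_2,\ldots,X_n)$ does not vanish identically, a case the paper does not address --- at the cost of a somewhat longer case analysis; the paper's iteration is quicker once one accepts the passage to rational functions. Note that both proofs (and indeed the statement itself) implicitly read ``$\nu\neq0$'' as ``$\nu\neq0$ in $k$'', which is all that is needed for the application with $k=\Q$.
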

\begin{proof}
Suppose $Q\neq 0$, and put $X_1=0$. Then the polynomial
\[
P(0,X_2,  \ldots, X_n) Q(X_2, \ldots, X_{n+1}) -
P(X_2, \ldots,X_{n+1}) Q(0, X_2, \ldots, X_n)
\]
vanishes identically, that is, putting $R(X_1, \ldots, X_n)=\frac{P(X_1, \ldots,
  X_n)}{Q(X_1, \ldots, X_n)}$ we find that
\[
R(0, X_2, \ldots, X_n) = R(X_2, \ldots, X_{n+1})
\]
holds identically. In particular, $R(X_1, \ldots, X_n)$ does not involve
$X_n$ at all. Hence,
\[
R(0, X_2, \ldots, X_{n-1}, 0) = R(X_2, \ldots, X_n, 0)
\]
holds identically, and we deduce that $R$ does not involve $X_{n-1}$
either. Continuing in this way we obtain that $R$ is constant, that
is,
\[
P(X_1, \ldots, X_n) Q(X_2, \ldots, X_{n+1}) -
P(X_2, \ldots,X_{n+1}) Q(X_1, \ldots, X_n)
\]
and therefore $\nu X_1P(X_1,\ldots, X_{n})$ vanishes identically. However,
since $\nu\neq 0$, this can only happen if $P$ vanishes.
\end{proof}
\begin{lem}
\label{lem:disclow}
Let $Q\in\Z[X]$ be a non-constant polynomial of degree $d$ and
coefficients bounded by $M$. Then the discrepancy $D$ of the sequence
$\big(Q\big(\frac{p_n}{n}\big)\bmod 1\big)_{n=x}^{2x}$ is bounded above by
\[
D\ll xe^{-c\sqrt{\log x}} + M^{1/3} x^{2/3}\log^{d/3} x
\]
\end{lem}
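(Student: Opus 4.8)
The plan is to apply the Erd\H os--Tur\'an inequality (Lemma~\ref{Lem:ErdTuran}), which gives $D\ll x/H+\sum_{1\le h\le H}|S(h)|$ with $S(h)=\sum_{n=x}^{2x}e(hQ(p_n/n))$, and to choose the parameter $H$ at the end. The sums $S(h)$ I would rewrite as sums over primes: since $p_n$ runs over the primes in $[p_x,p_{2x}]$ and $n=\pi(p_n)$, one has $S(h)=\sum_{p_x\le p\le p_{2x}}e\!\left(hQ\!\left(\tfrac{p}{\pi(p)}\right)\right)$. By the prime number theorem with the classical error term, $\pi(t)=\mathrm{li}(t)+O(t\exp(-c\sqrt{\log t}))$, one may replace $\pi(p)$ in the argument of $Q$ by the smooth function $\mathrm{li}(p)$, using the mean value theorem and the crude bound $|Q'|\ll dM(\log x)^{d-1}$ on the relevant range; summed over $h\le H$ and inserted into Lemma~\ref{Lem:ErdTuran}, the error of this replacement will be absorbed into the term $x\exp(-c\sqrt{\log x})$, and it is this step that limits the admissible size of $H$. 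Writing $g(t)=Q(t/\mathrm{li}(t))$, one is left with a smooth phase satisfying $g^{(j)}(t)\asymp M(\log t)^{d-1}t^{-j}$ on $[p_x,p_{2x}]$, the dominant term coming from $Q'(t/\mathrm{li}(t))$ times the $j$-th derivative of $t/\mathrm{li}(t)$ (provided $Q'$ has no zero in the very short range $\{t/\mathrm{li}(t):t\in[p_x,p_{2x}]\}$; the finitely many exceptional $Q$ are handled by splitting the summation range at those zeros).

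It then remains to estimate $T(h)=\sum_{p_x\le p\le p_{2x}}e(hg(p))$, which I would do by Vaughan's identity. After removing a logarithmic weight by partial summation, this decomposes $T(h)$ into a main term --- treated by the prime number theorem, and contributing only $O(x\exp(-c\sqrt{\log x}))$ once balanced against $x/H$ --- together with Type~I sums $\sum_{m\le U}a_m\sum_l e(hg(ml))$ and Type~II sums $\sum_{U<m\le V}\sum_l a_mb_l\,e(hg(ml))$ with $|a_m|,|b_l|\ll\log x$. In each inner sum $g(ml)$ is, as a function of $l$, a smooth phase whose derivatives are controlled by the estimates above, so the case $q=0$ of Lemma~\ref{Lem:WeylCorput} applies directly; the Type~II sums are first opened up by Cauchy--Schwarz and then estimated by the same lemma. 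Taking $U$ of order $x^{1/3}$, the standard choice in Vaughan's identity, and then optimising $H$ should produce the bound $D\ll x\exp(-c\sqrt{\log x})+M^{1/3}x^{2/3}\log^{d/3}x$.

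The step I expect to be the main obstacle is the treatment of the Type~II sums. Since $g(t)=Q(t/\mathrm{li}(t))$ depends on $t$ essentially only through $\log t$, the phase $g(ml)$ varies only logarithmically slowly in both $m$ and $l$: the second derivative in $l$ of $hg(ml)$ has size only $\asymp hM(\log x)^{d-1}l^{-2}$, and after Cauchy--Schwarz one must bound $\sum_m e\!\left(h\big(g(ml)-g(ml')\big)\right)$, in which the leading contributions in $l$ and $l'$ very nearly cancel, leaving a main part of size only $\asymp hM(\log x)^{d-2}|\log(l/l')|\,m^{-1}$. Keeping all these estimates uniform in $h$ for $h\le H$, while simultaneously arranging that the prime-number-theorem error from the first step remains below $x\exp(-c\sqrt{\log x})$ after summation over $h$, is the delicate point; the rest is routine bookkeeping with Lemma~\ref{Lem:WeylCorput} and partial summation.
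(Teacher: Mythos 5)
Your opening and closing moves (Erd\H os--Tur\'an, the prime number theorem with the classical error term, the $q=0$ case of Lemma~\ref{Lem:WeylCorput}, optimisation of $H$) are the right ingredients, but the step in between --- rewriting $S(h)$ as a sum over primes and attacking it with Vaughan's identity --- is where the argument breaks, and the obstacle you flag at the end is not merely delicate but fatal. The phase $g(t)=Q(t/\mathrm{li}(t))$ is essentially a polynomial of degree $d$ in $\log t$; writing $t=ml$ it becomes, up to negligible corrections, a polynomial in $\log m+\log l$. Such a phase carries no genuine bilinear oscillation: after Cauchy--Schwarz the inner sums $\sum_m e\big(h(g(ml)-g(ml'))\big)$ have total phase variation only of order $hM\log^{d-2}x\,|\log(l/l')|$, so the second-derivative test saves at most a bounded power of $hM\log x$ over the trivial bound --- for bounded $h$ and $M$ this is a power of $\log x$, nowhere near the power saving $x^{1/3}$ the lemma requires. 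No choice of $U$, $V$ or $H$ rescues this; Type~II information is simply unavailable for phases that are functions of $\log p$, and the Type~I sums alone cannot close Vaughan's identity.

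The resolution, and the route the paper takes, is that no exponential sum over primes is needed at all. Keep the sum indexed by the integer $n$ and smooth the \emph{other} variable: let $f=\mathrm{li}^{-1}$, so that the prime number theorem gives $p_n=f(n)+O\big(ne^{-c\sqrt{\log n}}\big)$ and hence $Q(p_n/n)=Q(f(n)/n)+O\big(e^{-c\sqrt{\log n}}\big)$, which accounts for the first term $xe^{-c\sqrt{\log x}}$ of the stated bound. One is then left with $\sum_{n=x}^{2x}e(hF(n))$ for the smooth function $F(t)=Q(f(t)/t)$ --- a complete exponential sum over consecutive integers, to which the $q=0$ case of Lemma~\ref{Lem:WeylCorput} applies directly and gives $\ll M^{1/2}x^{1/2}h^{1/2}\log^{d/2}x$; feeding this into Erd\H os--Tur\'an (with the weight $1/h$) and optimising $H$ yields $M^{1/3}x^{2/3}\log^{d/3}x$. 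Equivalently, in your own formulation: because the phase oscillates slowly, the prime sum $\sum_p e(hg(p))$ should be compared with a smooth integral by partial summation against $\pi(t)-\mathrm{li}(t)$, the PNT error absorbing the difference; no bilinear decomposition is needed, and introducing one destroys the structure that makes the problem tractable.
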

\begin{proof}
For $t\geq 2$, denote by $f(t)$ the inverse function of $\mathrm{li}\,t$, that is,
the unique positive solution of the equation
\[
\int_2^{f(t)} \frac{dx}{\log x} = t.
\]
Then, by the prime number theorem, we deduce that 
\[
Q\Big(\frac{p_n}{n}\Big)-Q\Big(\frac{f(n)}{n}\Big) \ll e^{-c\sqrt{\log n}},
\]
hence, it suffices to estimate the discrepancy $\tilde{D}$ of the sequence
\[
\big(Q\big(f(n)n^{-1})\bmod 1\big)_{n=x}^{2x}.
\]
We will do so using Lemma~\ref{Lem:ErdTuran}. Set
$F(t)=Q(f(t)t^{-1})$. Then we have for every integer $H\geq 1$
\[
\tilde{D}\ll\frac{x}{H} + \frac{1}{h}\sum_{h\leq H}\left|\sum_{n=x}^{2x} e(hF(n))\right|.
\]
The second derivative of $hF$ is of size $ht^{-1}M\log^d t$.
We can now apply the case $q=0$ of Lemma~\ref{Lem:WeylCorput} to deduce
\[
\sum_{n=x}^{2x} e(hF(n)) \ll M^{1/2}x^{1/2}h^{1/2}\log^{d/2} x,
\]
and therefore
\[
\tilde{D}\ll \frac{x}{H} + M^{1/2}x^{1/2}H^{1/2}\log^{d/2} x
\ll M^{1/3}x^{2/3}\log^{d/3} x.
\]
\end{proof}
\begin{proof}[Proof of Theorem 3.]
It suffices to show that
\[
S=\sum_{\nu=1}^\infty\frac{P(p_\nu)}{\nu!}
\]
is irrational for every polynomial $P$ with integral
coefficients which does not vanish identically. Assume to the contrary
that $S$ is rational. Then, for 
$n$ sufficiently large, $n!S$ is integral, and we deduce that
\[
\sum_{\nu=n+1}^\infty\frac{P(p_\nu)}{(n+1)\cdots\nu}
\]
is integral. Denote the degree of $P$ by $k$. Since $p_\nu\sim\nu\log\nu$, we
deduce
\[
\left\|\sum_{\nu=1}^{k-1}\frac{P(p_{n+\nu})}{(n+1)\cdots(n+\nu)}\right\|\ll\frac{\log^kn}{n}
\]
for all $n$ sufficiently large. Set
\[
F^{(0)}(n)=\sum_{\nu=1}^{k-1}\frac{P(p_{n+\nu})}{(n+1)\cdots(n+\nu)}.
\]
In the sequel we shall write $R(n)$ to denote a negligible error term,
that is, any function satisfying $R(n)\ll\frac{\log^c n}{n}$ for almost
all $n$ and some constant $c$. In particular, we have $\delta_n R(n) = R(n)$. 
Expanding $\frac{1}{(n+1)\cdots(n+k)}$ into a Laurent-series around 0, and
expressing $p_{n+k}$ by $p_n$ and the $\delta_{n+i}$'s, we find
\[
F^{(0)}(n) = \sum_{\nu=1}^k\sum_{\mu=1}^\nu P_{\nu\mu}^{(0)}(\delta_n, \ldots,
\delta_{n+k-1})\frac{p_n^\nu}{n^\mu} + R(n).
\]
Define a partial order on the set of pairs $(\nu, \mu)$ by $(\nu_1,
\mu_1)\succ(\nu_2, \mu_2)$, if $\nu_1-\mu_1>\nu_2-\mu_2$, or
$\nu_1-\mu_1=\nu_2-\mu_2$ and 
$\nu_1>\nu_2$; that is, the corresponding fraction
$\frac{p_n^{\nu_1}}{n^{\mu_1}}$ has faster growth than $\frac{p_n^{\nu_2}}{n^{\mu_2}}$.
We now define a sequence of functions $F_i$ and $P_{\mu\nu}^{(i)}$ recursively by 
\[
F^{(i+1)}(n) = P_{\nu_0\mu_0}^{(i)}(\delta_{n+1}, \ldots,
\delta_{n+k})F^{(i)}(n) - P_{\nu_0\mu_0}^{(i)}(\delta_n, \ldots, \delta_{n+k-1}) 
F^{(i)}(n+1)
\]
where $(\nu_0, \mu_0)$ is maximal with respect to $\succ$ among all pairs with the
property that $P_{\nu\mu}^{(i)}$ is non-trivial, and
\[
F^{(i)}(n) = \sum_{\nu=1}^{k-1}\sum_{\mu=1}^\nu P_{\nu\mu}^{(i)}(\delta_n, \ldots,
\delta_{n+k})\frac{p_n^\nu}{n^\mu} + R(n).
\]
We have
\begin{eqnarray*}
P_{\nu_0-1\mu_0}^{(i+1)} & = & \nu_0P_{\nu_0\mu_0}^{(i)}(\delta_n, \ldots, \delta_{n+k-1})\delta_n +
P_{\nu_0\mu_0}^{(i)}(\delta_{n+1}, \ldots,\delta_{n+k})P_{\nu_0-1\mu_0}^{(i)}(\delta_n, \ldots, \delta_{n+k-1})\\
&&\qquad - P_{\nu_0\mu_0}^{(i)}(\delta_n, \ldots, \delta_{n+k-1}) P_{\nu_0-1\mu_0}^{(i)}(\delta_{n+1}, \ldots, \delta_{n+k})
\end{eqnarray*}
Since $\nu_0\geq 1$, we deduce from Lemma~\ref{lem:Pnonvan} that
$P_{\nu_0-1\mu_0}^{(i+1)}$ does 
not vanish identically. In each step, the pair $(\nu_0, \mu_0)$ is removed
from the set of pairs occurring in $F^{(i)}$, and all pairs $(\nu, \mu)$
added satisfy $\nu-\mu<\nu_0-\mu_0$, thus, after finitely many steps the
maximum of $\nu-\mu$ is reduced by 1, and again after finitely many steps
the maximum is reduced to 0, that is, we reach some
$F^{(i)}$, such that $\mu=\nu$ for all $(\mu,\nu)$ with $P_{\mu\nu}\neq 0$, and
with at least one pair $(\mu, \nu)$ such that $P_{\mu\nu}\neq 0$. Moreover, we
have $\|F^{(i)}(n)\|= R(n)$. During the recursive process leading to
$F^{(i)}$, we multiplied the initial polynomial, which had rational
coefficients, with other polynomials with rational coefficients, 
and shifted indices. Hence, there exist some integer $\ell$ and
polynomials $Q_i\in\Q[X_1, \ldots,X_\ell]$, $1\leq i\leq \ell$, $Q_\ell\neq 0$, such that 
\[
\left\|\sum_{i=1}^\ell Q_i(\delta_n, \ldots, \delta_{n+\ell})\frac{p_n^i}{n^i}\right\|= R(n).
\]
Moreover, since $c\cdot R(n)=R(n)$ for every constant $c$, we may multiply
with all occurring denominators, and suppose that each $Q_i$ has
integral coefficients. Therefore, we can apply Lemma~ \ref{lem:sieve}
and find that for almost all $n$, one of the
polynomials $Q_i(\delta_n, \ldots, \delta_{n+\ell})$ does not vanish, while at
the same time, for almost all 
$n$ none of the differences $\delta_n, \ldots, \delta_{n+\ell}$ exceeds $\log^2
n$. Hence, setting $a_i=Q_i(\delta_n, \ldots, \delta_{n+\ell})$, we find that
for almost all $n$ there are integers $a_1, \ldots, a_\ell$, not all of which
vanish, with 
$0<|a_i|<\log^A n$ for some constant $A$ depending only on the initial
polynomial $P$, such that
\begin{equation}
\label{eq:aiungleichung}
\left\|\sum_{i=1}^\ell a_i\frac{p_n^i}{n^i}\right\|\ll e^{-c\sqrt{\log n}}.
\end{equation}
In particular, there are integers $a_1, \ldots, a_\ell$ such that
(\ref{eq:aiungleichung}) is satisfied for at least $\frac{x}{\log^{\ell A}
x}$ integers $n\leq x$ for some constant $A$. This clearly contradicts
Lemma~\ref{lem:disclow}, thereby proving Theorem 3.
\end{proof}

\noindent Jan-Christoph Schlage-Puchta\\
Mathematisches Institut\\
Eckerstr. 1\\
79104 Freiburg\\
Germany\\
jcsp@mathematik.uni-freiburg.de
\end{document}